\newtheorem{theorem}{Theorem}
\newtheorem{prop}[theorem]{Proposition}
\newtheorem{cor}[theorem]{Corollary}
\theoremstyle{definition}
\newtheorem{rmk}[theorem]{Remark}
\DeclareMathOperator{\GL}{GL}
\DeclareMathOperator{\Spin}{Spin}
\DeclareMathOperator{\PO}{PO}
\DeclareMathOperator{\PSO}{PSO}
\DeclareMathOperator{\PSL}{PSL}
\newcommand{\Z}{\mathbb Z}
\newcommand{\Q}{\mathbb Q}
\newcommand{\R}{\mathbb R}
\newcommand{\C}{\mathbb C}
\renewcommand{\P}{\mathbf P}
\newcommand{\f}{\mathfrak f}
\newcommand{\V}{V}
\newcommand{\Vf}{\V_{\mathrm{f}}}
\newcommand{\tB}{\mathrm{B}}
\newcommand{\tD}{\mathrm{D}}
\newcommand{\Hy}{\mathbf H}
\newcommand{\Isom}{\mathrm{Isom}}
\newcommand{\GG}{\mathbf G}
\newcommand{\MM}{\mathbf M}
\newcommand{\cM}{\mathcal M}
\newcommand{\bs}{\backslash}
\newcommand{\Cox}{\xymatrix{ *={\bullet} \ar@{-}[r]^{5}&
				*={\bullet} \ar@{-}[r] & *={\bullet} \ar@{-}[r] & *={\bullet}
				\ar@{-}[r]  & *={\bullet} 
			}}
\begin{document}

\begin{abstract}
We prove that for $n>4$ there is no compact arithmetic hyperbolic
$n$-manifold whose Euler characteristic has absolute value equal to 2.
In particular, this shows the nonexistence of arithmetically defined
hyperbolic rational homology $n$-sphere with $n$ even different than 4.
\end{abstract}

\title{On compact hyperbolic manifolds of Euler characteristic two}

\author{Vincent Emery}
\thanks{Supported by Swiss National Science Foundation, Project number
  {\tt PA00P2-139672}}
%\thanks{Emery partially supported by the Swiss National Science Foundation, project no 200020-121506/1}
\address{
Department of Mathematics\\
Stanford University\\
California 94305\\
USA
}
\email{vincent.emery@gmail.com}

\date{\today}
\dedicatory{Dedicated to the memory of Colin Maclachlan}
%\subjclass{22E40 (primary); 11E57, 20G30, 51M25 (secondary)}
% Group Theory;  Geometric Topology, Number Theory

\maketitle

\section{Main result and discussion}
\label{intro}

\subsection{Smallest hyperbolic manifolds}
\label{ss:intro-thm}

Let $\Hy^n$ be the hyperbolic $n$-space.
By a \emph{hyperbolic $n$-manifold} we mean an orientable manifold
$M = \Gamma\bs \Hy^n$, where $\Gamma$ is a torsion-free discrete subgroup
$\Gamma \subset \Isom^+(\Hy^n)$. 
The set of volumes of hyperbolic $n$-manifolds being well ordered, it is
natural to try to determine for each dimension $n$ the hyperbolic manifolds
of smallest volume.
For $n=3$ this problem has recently been solved
in \cite{GMM10}, the smallest volume being achieved by a unique compact manifold,
the Weeks manifold. When $n$ is even the volume is proportional to the
Euler characteristic, and this allows to formulate the problem in terms
of finding the hyperbolic manifolds $M$ with smallest $|\chi(M)|$. In
particular this observation solves the problem in the case of surfaces.
For $n>3$, noncompact hyperbolic $n$-manifolds $M$ with $|\chi(M)| = 1$ 
have been found for $n = 4, 6$ \cite{ERT10}.

In the present paper we consider the case of compact
manifolds of even dimension. In particular, such manifolds have even
Euler characteristic (see \cite[Theorem 1.2]{KelZeh01}).  We restrict ourselves to the case of
\emph{arithmetic} manifolds, where Prasad's formula \cite{Pra89} can be used to study
volumes. We complete the proof of the following result.

\begin{theorem}
 \label{thm:non-existence}
 Let $n>5$. There is no compact arithmetic manifold $M =
 \Gamma\bs\Hy^n$ with $|\chi(M)| = 2$. 
\end{theorem}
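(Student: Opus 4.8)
The plan is to reduce the theorem to a single lower bound on the orbifold Euler characteristic of a maximal arithmetic lattice, and then to establish that bound with Prasad's volume formula. Since a closed manifold of odd dimension has vanishing Euler characteristic, only even $n = 2\ell$ with $\ell \geq 3$ needs attention. For such $n$ the group $\Isom^+(\Hy^n) = \SO(n,1)^\circ$ is a form of type $\mathrm{B}_\ell$, and because the Dynkin diagram $\mathrm{B}_\ell$ has no symmetry, no outer forms occur and the whole analysis stays within inner forms. Every arithmetic $\Gamma$ sits inside a maximal arithmetic subgroup $\Lambda$, and writing $M = \Gamma\bs\Hy^n \to \Lambda\bs\Hy^n$ as a finite orbifold cover, multiplicativity of the Euler characteristic gives $|\chi(M)| = [\Lambda:\Gamma]\cdot|\chi(\Lambda\bs\Hy^n)|$ with $[\Lambda:\Gamma]$ a positive integer. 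Hence it suffices to prove that $|\chi(\Lambda\bs\Hy^n)| > 2$ for every maximal arithmetic $\Lambda$; equivalently, that the minimal orbifold Euler characteristic in dimension $n$ exceeds $2$ once $\ell \geq 3$.

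To compute $|\chi(\Lambda\bs\Hy^n)|$ I would pass to the simply connected cover $\G = \Spin(q)$ of the defining quadratic form $q$ of signature $(n,1)$ over a totally real field $k$. By Meyer's theorem an indefinite form in $n+1 \geq 7$ variables over $\Q$ is isotropic, so compactness of $M$ forces $q$ to be anisotropic and therefore $[k:\Q] \geq 2$; in particular $D_k \geq 5$, attained only by $k = \Q(\sqrt 5)$. Prasad's formula expresses the covolume of a principal arithmetic subgroup of $\G$ as a product of a power $D_k^{(\dim\G)/2}$ of the discriminant, an archimedean factor $\bigl(\prod_{i=1}^{\ell} \tfrac{(2i-1)!}{(2\pi)^{2i}}\bigr)^{[k:\Q]}$ built from the exponents of $\mathrm{B}_\ell$, the special value $\prod_{i=1}^{\ell} \zeta_k(2i) > 1$, and a product of nonarchimedean local factors $\prod_v \lambda_v$. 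Combining this with the Gauss--Bonnet relation $\chi(\Lambda\bs\Hy^{2\ell}) = (-1)^\ell\,2\,\mathrm{vol}(\Lambda\bs\Hy^{2\ell})/\mathrm{vol}(S^{2\ell})$, and absorbing the bounded index between the principal arithmetic subgroup and the maximal $\Lambda$ (coming from the center of $\G$ and from class-number contributions, both controlled by Borel--Prasad), yields an explicit lower bound for $|\chi(\Lambda\bs\Hy^n)|$ in terms of $n$, $[k:\Q]$ and $D_k$.

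The remaining task is to show this bound exceeds $2$ for all $\ell \geq 3$. The archimedean factor together with $D_k^{(\dim\G)/2}$ and $\prod_i\zeta_k(2i) > 1$ grows extremely fast in $n$, so I would dispatch the first cases $n = 6, 8$ by inserting the smallest admissible field and a direct numerical estimate, then argue monotonicity in $n$, and in $D_k$ using Odlyzko's lower bounds on root discriminants to dominate the growth of $[k:\Q]$, to cover all larger even dimensions at once. The main obstacle is that the nonarchimedean factors $\lambda_v$ need not be close to $1$: at ramified primes they can be strictly less than $1$ and so shrink the covolume. The delicate step is therefore to bound $\prod_v \lambda_v$ from below uniformly, isolating the finitely many primes where $\lambda_v < 1$ and showing that the resulting loss is absorbed by the archimedean and discriminant gain. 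Once this uniform lower bound is secured, the inequality $|\chi(\Lambda\bs\Hy^n)| > 2$ holds for every $\ell \geq 3$, and since $|\chi(M)| \geq |\chi(\Lambda\bs\Hy^n)|$ the value $|\chi(M)| = 2$ is excluded.
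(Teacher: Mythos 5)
There is a genuine gap, and it sits exactly at the cases the theorem is actually about. Your reduction rests on the claim that $|\chi(\Lambda\bs\Hy^n)| > 2$ for every maximal arithmetic $\Lambda$ once $n = 2\ell \ge 6$. That bound is \emph{false} for $n = 6, 8, 10$: maximal arithmetic lattices are orbifolds whose rational Euler characteristic can be a very small fraction (in dimension $4$ there is a principal arithmetic subgroup with $|\chi| = 1/14400$, and Belolipetsky's computation of the minimal orbifold Euler characteristic shows that for compact quotients the minimum exceeds $2$ only for $n > 10$). So the volume-estimate strategy you outline proves the theorem only for $n > 10$ --- which was already known --- and cannot close the dimensions $6$, $8$, $10$ that are the real content. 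What is needed instead is an \emph{arithmetic} obstruction, not a size obstruction: passing to $\Gamma \subset \Spin(n,1)$ (where $|\chi(M)| = 2$ becomes $|\chi(\Gamma)| = 1$), one must show that no maximal lattice has $\chi$ equal to a reciprocal integer $1/q$. The volume formula, Odlyzko-type discriminant bounds and the class number estimate serve only to cut the problem down to finitely many totally real fields of degree $2$ and $3$ with small discriminant; the decisive step is then to write, via the functional equation, $|\chi(\Gamma)| = 2^{-a} \prod_{v\in T}\lambda_v \prod_{j=1}^{r}|\zeta_k(1-2j)|$, check that each $\lambda_v$ is an integer (a polynomial in $q_v$), and verify from tables of special values that for every candidate field an odd prime appears in the numerator of $\prod_j |\zeta_k(1-2j)|$, so the product can never be a reciprocal integer. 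None of this numerator analysis is present, or even hinted at, in your plan.

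A secondary inaccuracy: your ``delicate step'' of controlling places where $\lambda_v < 1$ does not arise. In Prasad's normalization the local factors at non-hyperspecial parahorics of maximal type satisfy $\lambda_v > 4$, so they only \emph{increase} the covolume; the quantity that can shrink the Euler characteristic is the index $[\Gamma:\Lambda]$ of the principal arithmetic subgroup in its normalizer, which is controlled by the divisibility $[\Gamma:\Lambda] \mid h_k\, 2^d\, 4^{\#T}$, and the factor $4^{\#T}$ is absorbed precisely because $\lambda_v > 4$. Your instinct that some loss must be absorbed is right, but you located it at the wrong place, and in any case repairing it would still leave you with a bound that cannot exceed $2$ in dimensions $6$ through $10$.
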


The result for $n > 10$ already follows from the work of Belolipetsky
\cite{Belo04,Belo07}, also based on Prasad's volume formula. More
precisely, Belolipetsky determined the smallest Euler characteristic
$|\chi(\Gamma)|$ for arithmetic orbifold quotients $\Gamma\bs\Hy^n$ ($n$
even). This smallest value grows fast with the dimension $n$, and for
compact quotients we have $|\chi(\Gamma)| > 2$ for $n>10$.
That the result of nonexistence holds
for $n$ high enough is already a consequence of Borel-Prasad's general
finiteness result \cite{BorPra89}, which was the first application of
Prasad's formula.  The proof of Theorem \ref{thm:non-existence} for
$n=6, 8, 10$ requires a more precise analysis of the Euler
characteristic of arithmetic subgroups $\Gamma \subset \PO(n,1)$,
and in particular of the special values of Dedekind zeta functions 
that appear as factors of $\chi(\Gamma)$.

%This work of Borel and Prasad contains the useful tools
%that complement Prasad's formula and allow to study the covolumes of
%\emph{maximal} arithmetic subgroups. For most applications this is all
%one needs to know, an arithmetic subgroup being always contained in a
%maximal one.

%\subsection{Dimension 4}
%\label{ss:dim-4}

For $n = 4$, the corresponding problem is not solved, but there is the
following result \cite{Belo07}.
\begin{theorem}[Belolipetsky]
 If $M = \Gamma\bs\Hy^4$ is a compact arithmetic manifold with
 $\chi(M) \le 16$, then $\Gamma$ arises as a (torsion-free) subgroup of the following
 hyperbolic Coxeter group:
 \begin{align}
 	W_1 &= \;\; \Cox
   \label{W1}
 \end{align}
  \label{thm:dim-4}
\end{theorem}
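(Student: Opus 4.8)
The plan is to reduce the statement, via the Gauss--Bonnet theorem and Prasad's volume formula, to a finite enumeration of commensurability classes. Since the dimension is even, Gauss--Bonnet gives $\chi(M)=\tfrac{3}{4\pi^2}\,\mathrm{vol}(M)$, so the hypothesis $\chi(M)\le 16$ is a volume bound. A torsion-free $\Gamma$ is a finite-index subgroup of a maximal arithmetic lattice $\Gamma_0$, and orbifold Euler characteristic is multiplicative, $|\chi(M)|=[\Gamma_0:\Gamma]\,|\chi(\Gamma_0)|$; hence it suffices to list every commensurability class whose maximal lattice satisfies $|\chi(\Gamma_0)|\le 16$. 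Because $\SO(4,1)$ is, up to isogeny, the special orthogonal group of a quadratic form of signature $(4,1)$, and the type $\tB_2$ has no outer forms, every such $\Gamma\subset\Isom^+(\Hy^4)$ comes from an admissible quadratic form $q$ of rank $5$ over a totally real field $k$, of signature $(4,1)$ at one real place and positive definite at the others. The field $\Q$ is then excluded outright: a rank-$5$ form over $\Q$ is isotropic at every finite place and indefinite at infinity, hence isotropic over $\Q$ by Hasse--Minkowski, so it yields only noncompact quotients. Thus $[k:\Q]\ge 2$, and over a real quadratic field definiteness at the second real place already forces $q$ anisotropic, giving the sought cocompact lattices.

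First I would write down, combining Prasad's formula with the Euler--Poincaré computation, the orbifold Euler characteristic of a principal arithmetic lattice $\Lambda$ as an explicit product
\begin{equation*}
 |\chi(\Lambda)| = C\cdot d_k^{\,5}\,\zeta_k(2)\,\zeta_k(4)\,\prod_{v}\lambda_v ,
\end{equation*}
where the exponent $5=\tfrac12\dim\SO(5)$ is the discriminant exponent of Prasad's formula, $C$ is an explicit constant in which the exponents $1,3$ of $\tB_2$ produce the evaluation of $\zeta_k$ at $s=2,4$, $d_k$ is the absolute discriminant of $k$, and the local factors $\lambda_v\ge 1$ are supported on the finite places where the lattice is non-hyperspecial. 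A bounded index correction accounts for passing from $\Lambda$ to a maximal $\Gamma_0$ via its normalizer.

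The core of the argument is a growth estimate. Using the functional equation to rewrite $d_k^{\,5}\,\zeta_k(2)\,\zeta_k(4)$ in terms of the rational values $\zeta_k(-1),\zeta_k(-3)$, together with Odlyzko's lower bounds for $d_k$ in terms of $[k:\Q]$ and the inequalities $\zeta_k(2)\zeta_k(4)\ge 1$, $\lambda_v\ge 1$, one shows that $|\chi(\Gamma_0)|$ grows rapidly with $d_k$ and with $[k:\Q]$. The bound $|\chi(\Gamma_0)|\le 16$ therefore forces $[k:\Q]=2$ and caps $d_k$, leaving only a short list of small real quadratic fields. For each survivor I would optimize over the admissible forms, i.e.\ over the local conditions that determine $\prod_v\lambda_v$, to minimize $|\chi(\Gamma_0)|$; the fields other than $\Q(\sqrt5)$ are then eliminated by direct numerical comparison, and over $\Q(\sqrt5)$ the minimum is attained by the everywhere-hyperspecial form, whose maximal lattice is commensurable with the Coxeter group $W_1=[5,3,3,3]$ of \eqref{W1}.

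The main obstacle is precisely this last, delicate step: for the very smallest fields the asymptotic discriminant bounds are far from sharp, so one must compute the exact special values $\zeta_k(2),\zeta_k(4)$ and the exact local factors $\lambda_v$ for every candidate and carry out the optimization over the genus of forms. One must then verify that the minimal class is the one containing $W_1$ by matching orbifold Euler characteristics, check that $W_1$ is maximal and is, up to conjugacy, the unique maximal lattice of its class, and rule out any \emph{second} commensurability class of comparably small covolume that could also admit a torsion-free representative with $\chi\le 16$. Tracking the indices between the principal arithmetic subgroup, its normalizer, and $W_1$ — and thereby concluding that every such $\Gamma$ embeds in $W_1$ rather than merely being commensurable with it — is where I expect the real work to lie.
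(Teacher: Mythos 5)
You should first note that the paper does not prove Theorem \ref{thm:dim-4} at all: it is quoted from Belolipetsky \cite{Belo07}, so the only meaningful comparison is with the paper's own Section \ref{sec:proof-thm1}, which runs the same machinery one level up in dimension. Your frame — Gauss--Bonnet, reduction to maximal lattices realized as normalizers of principal arithmetic subgroups, Prasad's formula with the exponent $r^2+r/2=5$ and the zeta values $\zeta_k(2)\zeta_k(4)$, exclusion of $k=\Q$ by Hasse--Minkowski, then functional equation plus Odlyzko bounds — is indeed that machinery. But there is a genuine quantitative gap at the heart of your enumeration: the covolume bound $|\chi(\Gamma_0)|\le 16$ does \emph{not} ``force $[k:\Q]=2$,'' and the surviving fields cannot be ``eliminated by direct numerical comparison.'' For $r=2$ the smallest classes have tiny Euler characteristic: over the totally real cubic field with $|D_k|=49$ one has $|\zeta_k(-1)\,\zeta_k(-3)|=\frac{1}{21}\cdot\frac{79}{210}$, so by the $r=2$ analogue of \eqref{eq:EP-rational} there are maximal lattices over this \emph{cubic} field with $|\chi(\Gamma_0)|$ far below $16$, and likewise over several quadratic fields beyond $\Q(\sqrt5)$ (the class of $W_1$ itself has $|\chi|=1/14400$). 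A cut at $16$ therefore leaves many commensurability classes in degrees $2$ and $3$. What actually eliminates them is arithmetic, not size: writing $|\chi(\Gamma_0)|=p/q$ in lowest terms, a torsion-free $\Gamma\le\Gamma_0$ has $\chi(M)=[\Gamma_0:\Gamma]\,p/q$, so $q$ divides the index and $\chi(M)$ is a positive even integer multiple of $p$; any class whose numerator exceeds $16$ --- for instance the $79$ above, or the nontrivial numerators visible in Table \ref{tab:special-values} --- is excluded outright, with the $\lambda_v$ of Table \ref{tab:lambda-fac} only inflating numerators further since they are integral polynomials in $q_v$. This numerator bookkeeping, the exact mechanism of the paper's own proof via \eqref{eq:EP-rational}, is the missing engine of Belolipetsky's argument; pure covolume comparison cannot isolate $\Q(\sqrt5)$.

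Two further points. First, $\lambda_v\ge 1$ is too weak: by \eqref{eq:index} the normalizer can exceed $\Lambda$ by a factor involving $4^{\#T}$, so a lower bound on $|\chi(\Gamma_0)|$ needs $\lambda_v>4$ at each non-hyperspecial place, as the paper establishes after Table \ref{tab:lambda-fac}; with only $\lambda_v\ge1$ your growth estimate for maximal lattices does not close. Second, the conclusion is an embedding $\Gamma\le W_1$, not commensurability with it; you correctly flag this, but it is the actual content of the theorem rather than a residual verification: one must show that among \emph{all} maximal lattices passing the divisibility test --- including other normalizers in the same commensurability class with different parahoric types --- only the conjugates of $W_1$ survive, so that every torsion-free $\Gamma$ with $\chi(M)\le 16$ lands inside $W_1$ itself. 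As it stands the proposal is a sound plan whose two decisive steps (numerator divisibility and uniqueness of the maximal lattice) are absent or asserted incorrectly.
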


An arithmetic (orientable) hyperbolic $4$-manifold of 
Euler characteristic $16$ has been first constructed by
Conder and Maclachlan in \cite{CondMac05}, using the
presentation of $W_1$ to obtain a torsion-free subgroup with the help of
a computer. Further examples with $\chi(M) = 16$ have been obtained by Long
in \cite{Long08} by considering a homomorphism from $W_1$ onto the finite simple
group $\mathrm{PSp}_4(4)$.

%In \S we will explain the precise arithmetic
%description of $W_1$ that follows from the work of Belolipetsky, and how
%the manifolds obtained by Long should corresponds to \emph{congruence
%subgroups}. The question of finding a smaller manifold will naturally
%lead us to recall the \emph{congruence subgroup problem} in \S
%\ref{ss:CSP}.

%In \S\ref{ss:Davis-manifold} we will consider as an example a second Coxeter group $W_2$,
%commensurable to $W_1$, and which is well known to contain the fundamental group group of a
%$4$-manifold $M$ with $\chi(M) = 26$: the \emph{Davis manifold}. Again,
%as an example of application of Prasad's formula, we can describe $W_2$ in arithmetic terms.

\subsection{Hyperbolic homology spheres}
\label{ss:homology-spheres}

Our original motivation for Theorem \ref{thm:non-existence} was the
problem of existence of hyperbolic  homology spheres. A \emph{homology
$n$-sphere} (resp. \emph{rational} homology $n$-sphere) is a
$n$-manifold $M$ that possesses the same integral (resp. rational)
homology as the $n$-sphere $S^n$. This forces $M$ to be compact and
orientable.  

%The first homology sphere not homeomorphic to $S^n$ was
%constructed by Poincar\'e as a quotient of $S^3$. This example showed that
%homology is not sufficient to characterize the $n$-sphere, and this lead
%Poincar\'e to use homotopy instead to formulate the correct version of his
%famous conjecture (Poincar\'e had previously thought that homology was
%sufficient, see BULL). The \emph{Poincar\'e homology sphere} being a
%spherical manifold, this motivates the question of classifying the
%(rational) homology spheres that possess a prescribed geometric
%structure.

Rational homology $n$-spheres $M$ have $\chi(M) = 2$ if $n$ is even. 
On the other hand, for $M = \Gamma\bs\Hy^n$ with $n = 4k+2$ we have
$\chi(M) < 0$ (cf. \cite[Proposition 23]{Serr71}), and this exclude the possibility of hyperbolic
rational homology spheres for those dimensions. For $n$ even,
Wang's finiteness theorem \cite{Wang72} implies that there is only a
finite number of hyperbolic rational homology $n$-spheres. 
Theorem \ref{thm:non-existence} shows the nonexistence of arithmetic rational
homology spheres for $n>5$ even. 

For odd dimensions, $\chi(M) = 0$ and \emph{a priori} the volume is not
a limitation for the existence of hyperbolic (rational) homology
spheres. In fact, an infinite tower of covers by hyperbolic integral homology
$3$-spheres has been constructed by Baker, Boileau and Wang in
\cite{BBW01}. In \cite{CalDun06} Calegari and Dunfield constructed an infinite tower of hyperbolic rational
homology $3$-spheres that are arithmetic and obtained by congruence
subgroups. Note that a recent conjecture of Bergeron and Venkatesh
predicts a lot of torsion in the homology groups of such a ``congruence tower'' of arithmetic
$n$-manifolds with $n$ odd \cite{BergVenka}.

\subsection{Locally symmetric homology spheres}
\label{ss:locally-symmetric}

Instead of considering hyperbolic homology spheres, one can more
generally look for homology spheres that are locally isometric to a
given symmetric space of nonpositive nonflat sectional curvature.
Such a symmetric space $X$ is called of noncompact type, and
it is classical that $X$ can be written as $G/K$, where $G$ is a
connected real semisimple Lie group with trivial center with $K \subset G$ a
maximal compact subgroup.  Moreover, $G$ identifies as a finite index
subgroup in the group of isometries of $X$ (of index two if $G$ is
simple).

Let us explain why the case $X = \Hy^n$ is the main source
of locally symmetric rational homology spheres (among $X$ of noncompact
type). Let $M$ be a compact orientable manifold locally isometric to
$X$. Then $M$ can be written as $\Gamma\bs X$, where $\Gamma \cong \pi_1(M)$ is
a discrete subgroup of
isometries of $X$. We will suppose that $\Gamma \subset G$, for $G$ as
above.  Let $X_u$ be the compact dual of $X$. We have the
following general result (see \cite[Sections 3.2 and 10.2]{Bor74}).

\begin{prop}
  \label{prop:inj-Hj}
  For each $j$ there is an injective homomorphism $H^j(X_u,\C) \to
  H^j(\Gamma\bs X,\C)$.
\end{prop}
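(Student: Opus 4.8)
The plan is to realize the map through $G$-invariant differential forms on $X$, and to prove injectivity by Hodge theory on the compact quotient $\Gamma\bs X$. Write $\mathfrak{g} = \mathfrak{k} \oplus \mathfrak{p}$ for the Cartan decomposition attached to $K$, so that $\mathfrak{p}$ is identified with the tangent space to $X$ at the base point. The compact dual $X_u = G_u/K$ (with $G_u$ the compact form of $G$) is a compact symmetric space, and by E.~Cartan's theorem its de Rham cohomology is computed by its invariant forms; these correspond to $K$-invariant elements of $\wedge^\ast \mathfrak{p}^\ast$. Thus $H^j(X_u,\C) \cong (\wedge^j \mathfrak{p}^\ast)^K$, and the very same space parametrizes the $G$-invariant forms on $X = G/K$.

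First I would record that on a symmetric space the invariant forms are parallel for the Levi-Civita connection, hence closed. Indeed, the geodesic symmetry $s_o$ at the base point acts by $-1$ on $\mathfrak{p}$, hence by $(-1)^j$ on $\wedge^j \mathfrak{p}^\ast$, while $d$ raises degree by one; comparing $s_o^\ast d\omega = (-1)^{j+1} d\omega$ with $s_o^\ast d\omega = d\, s_o^\ast \omega = (-1)^j d\omega$ forces $d\omega = 0$ on invariant forms. A $G$-invariant form on $X$ is in particular $\Gamma$-invariant, so it descends to a (closed) form $\bar\omega$ on $\Gamma\bs X$. Sending $\omega$ to the class $[\bar\omega]$ then defines the homomorphism $H^j(X_u,\C) \to H^j(\Gamma\bs X,\C)$.

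The main point, and the step I expect to be the crux, is injectivity. Since $\Gamma\bs X$ is compact, I would invoke Hodge theory: every class has a unique harmonic representative, and the only exact harmonic form is zero. The descended forms are parallel, hence harmonic, on $\Gamma\bs X$. Therefore, if an invariant form $\omega$ maps to an exact class, its descent $\bar\omega$ is simultaneously harmonic and exact, so $\bar\omega = 0$. But a $G$-invariant form is determined by its value at a single point, so $\bar\omega = 0$ forces $\omega = 0$ already in $(\wedge^j \mathfrak{p}^\ast)^K = H^j(X_u,\C)$. This gives the injectivity.

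Alternatively, one can proceed representation-theoretically via Matsushima's formula, which decomposes $H^j(\Gamma\bs X,\C)$ as a sum, over the unitary representations $\pi$ occurring in $L^2(\Gamma\bs G)$, of the relative Lie algebra cohomologies $H^j(\mathfrak{g},K;V_\pi)$. The trivial representation always occurs with multiplicity one and contributes $H^j(\mathfrak{g},K;\C) = (\wedge^j \mathfrak{p}^\ast)^K \cong H^j(X_u,\C)$ as a direct summand, which exhibits the injection at once. Either route reduces the statement to the identification of $H^\ast(X_u,\C)$ with invariant forms together with the harmonicity of those forms on the compact quotient.
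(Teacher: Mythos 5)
Your proof is correct and follows essentially the same route as the paper, which gives no in-text argument but defers to Borel (Sections 3.2 and 10.2 of \cite{Bor74}), where the map is constructed exactly this way --- $G$-invariant forms, identified with $(\wedge^j\mathfrak{p}^\ast)^K \cong H^j(X_u,\C)$ via Cartan's theorem, descend to parallel hence harmonic forms on the compact quotient, and Hodge theory gives injectivity. Your Matsushima-formula alternative is the standard representation-theoretic repackaging of the same fact and is equally valid in the cocompact setting assumed here.
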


In particular, if $\Gamma\bs X$ is a rational homology sphere, then so is
$X_u$. Note that the compact dual of $X = \Hy^n$ is the genuine sphere
$S^n$.  By looking at the classification of compact symmetric spaces,
Johnson showed the following in \cite[Theorem 7]{Johns82}.
\begin{cor}
  \label{cor:restrict-hyperbolic}
If $M = \Gamma \bs X$ is a rational homology $n$-sphere with $\Gamma \subset
G$, then $X$ is either the hyperbolic $n$-space $\Hy^n$ (with $n \neq
4k+2$), or $X = \PSL_3(\R)/\PSO(3)$ (which has dimension $5$). 
\end{cor}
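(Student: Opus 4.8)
The plan is to transfer the sphere condition from $M = \Gamma\bs X$ to its compact dual $X_u$ via Proposition \ref{prop:inj-Hj}, and then to read off the answer from the classification of compact symmetric spaces. Since $M$ is a rational homology $n$-sphere we have $H^j(M,\C) = 0$ for $0 < j < n$, and the injectivity of $H^j(X_u,\C) \to H^j(M,\C)$ forces $H^j(X_u,\C) = 0$ in the same range. As $X_u$ is a compact simply connected manifold of the same dimension $n$ as $X$, Poincaré duality gives $H^0(X_u,\C) = H^n(X_u,\C) = \C$, so $X_u$ is itself a rational cohomology $n$-sphere. The problem is thus reduced to determining which compact symmetric spaces of noncompact-type origin have the rational cohomology of a sphere, and then dualizing back to $X$.

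First I would reduce to the irreducible case. Because $X$ is of noncompact type, $X_u$ carries no flat factor and decomposes as a product $X_u = X_1 \times \cdots \times X_k$ of irreducible compact symmetric spaces. By the Künneth formula $H^*(X_u,\C) = \bigotimes_i H^*(X_i,\C)$; if $k \ge 2$ with each factor of positive dimension, then letting $p$ denote the smallest positive degree in which some factor has nonzero cohomology, one obtains a nonzero class of $H^*(X_u,\C)$ in degree $p$ with $0 < p \le \dim X_i < n$, contradicting the sphere condition. Hence $X_u$ must be irreducible.

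The core of the argument is then a case check over Cartan's list of irreducible compact symmetric spaces, using their known rational Poincaré polynomials, which factor explicitly in terms of the exponents of the pair $(U,K)$. One singles out exactly those with Poincaré polynomial $1 + t^n$: these are the round spheres $S^n = \SO(n+1)/\SO(n)$, whose noncompact dual is $\Hy^n$, together with $\SU(3)/\SO(3)$ (the Wu manifold), which is a rational cohomology $5$-sphere and whose noncompact dual is $\PSL_3(\R)/\PSO(3)$. I expect this classification step to be the main obstacle, since it requires going through each family (and the exceptional cases) and computing enough of the cohomology to eliminate every space but these two; the even-dimensional matches are the equal-rank spaces $S^{2m}$, while the odd-dimensional matches require tracking the exterior-algebra generators.

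Finally I would impose the remaining necessary condition coming from the Euler characteristic to obtain the restriction $n \neq 4k+2$ in the hyperbolic case. A rational homology $n$-sphere satisfies $\chi(M) = \chi(S^n)$, which equals $2$ for $n$ even. On the other hand, by the Gauss--Bonnet proportionality for hyperbolic manifolds (cf. Serre's computation of the sign of $\chi$), $\chi(\Gamma\bs\Hy^n) < 0$ when $n \equiv 2 \pmod 4$; comparing signs rules out these dimensions. The dimensions $n \equiv 0 \pmod 4$ and the odd case ($\chi = 0$) remain consistent, as does $\PSL_3(\R)/\PSO(3)$ since $\dim = 5$ is odd.
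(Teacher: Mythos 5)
Your proposal is correct and takes essentially the same route as the paper: you transfer the rational homology sphere condition to the compact dual $X_u$ via Proposition \ref{prop:inj-Hj}, appeal to the classification of irreducible compact symmetric spaces to single out $S^n$ and $\SU(3)/\SO(3)$ (exactly the step the paper delegates to Johnson \cite[Theorem 7]{Johns82}), and exclude $n = 4k+2$ by Serre's sign result $\chi(\Gamma\bs\Hy^{4k+2})<0$, just as the paper does. The only difference is that you sketch the Künneth reduction to the irreducible case and the Poincar\'e-polynomial case check explicitly, where the paper simply cites the reference.
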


%\subsection{Fake projective planes}
%\label{ss:fake-projective}

Proposition \ref{prop:inj-Hj} shows that the correct problem to look at
-- rather than homology spheres -- is the existence of locally symmetric spaces $\Gamma\bs X$ with the same (rational) homology as the compact dual
$X_u$.  
When $X$ is the complex hyperbolic plane $\Hy^2_\C$, the compact
dual is the projective plane $\P_\C^2$, and the quotients $\Gamma\bs X$
are compact complex surfaces called \emph{fake projective planes}. 
Their classification was recently obtained by the work of Prasad--Yeung
\cite{PraYeu07}, together with Cartwright--Steger \cite{CarSt10} who performed the
necessary computer search. Later, Prasad and Yeung also considered 
the problem of the existence of more general arithmetic fake Hermitian spaces
\cite{PraYeu09,PraYeu12}.

The present paper uses the same methodology as in Prasad and Yeung's work,
the main ingredient being the volume formula. 

\subsection*{Acknowledgements} It is a pleasure to thank Gopal Prasad,
who suggested this research project.

\section{Proof of Theorem \ref{thm:non-existence}}
\label{sec:proof-thm1}

Let $G = \PO(n,1)^\circ \cong \Isom^+(\Hy^n)$,
and consider the universal covering $\phi: \Spin(n,1) \to G$. 
For our purpose it will be easier to work with lattices in $\Spin(n,1)$.
%(the reason is that the algebraic groups $\GG$ considered in Section
%\ref{ss:defn-arithm} are then simply connected).
A lattice $\overline{\Gamma} \subset G$ is arithmetic exactly 
when $\Gamma = \phi^{-1}(\overline{\Gamma})$ is an arithmetic subgroup of
$\Spin(n,1)$.
%we will make this notion precise below.
Since the covering $\phi$ is twofold, we have  $\chi(\Gamma) = \frac{1}{2}
\chi(\overline{\Gamma})$, where $\chi$ is the Euler characteristic in
the sense of C.T.C.~Wall. In particular, if $M = \overline{\Gamma}\bs \Hy^n$ 
is a manifold with $|\chi(M)| = 2$, then $|\chi(\Gamma)| = 1$.
Thus, Theorem \ref{thm:non-existence} is an obvious consequence of the 
following proposition. The proof relies on the description of arithmetic
subgroups with the help of Bruhat-Tits theory, as done for instance in
\cite{BorPra89} and \cite{Pra89}. An introduction can be found in
\cite{EmePhD}.  We also refer to \cite{Tits79} for the needed facts from 
Bruhat-Tits theory. 

\begin{prop}
Let $n>4$.
 There is no cocompact arithmetic lattice $\Gamma \subset \Spin(n,1)$ such that
 $\chi(\Gamma)$ is a reciprocal integer, i.e., such that $\chi(\Gamma) = 1 / q$
 for some $q \in \Z$. 
\end{prop}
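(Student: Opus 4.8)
Throughout we may assume $n = 2m$ is even, since for $n$ odd the Euler characteristic of $\Gamma\bs\Hy^n$ vanishes and the statement is trivial. A cocompact arithmetic lattice $\Gamma \subset \Spin(n,1)$ arises from an admissible algebraic group $\GG$ over a totally real field $k$: since the Dynkin diagram $\mathrm B_m$ of $\Spin_{2m+1}$ admits no nontrivial automorphism, $\GG$ is an inner $k$-form of $\Spin_{2m+1}$, anisotropic over $k$ and such that $\GG(k\otimes_\Q\R) \cong \Spin(n,1)\times\Spin(n+1)^{d-1}$ with $d = [k:\Q]$; cocompactness forces the factor to be the compact $\Spin(n+1)$ at the remaining $d-1$ archimedean places. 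The plan is to combine Prasad's volume formula \cite{Pra89} with Harder's Gauss--Bonnet theorem to write $\chi$ as an explicit rational quantity, and then to show that its numerator can never be $\pm 1$.

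First I would reduce to maximal lattices. Every $\Gamma$ sits inside a maximal arithmetic lattice $\Gamma'$, and $\chi(\Gamma) = [\Gamma':\Gamma]\,\chi(\Gamma')$. Writing $\chi(\Gamma')=a/b$ in lowest terms, the equality $\chi(\Gamma)=1/q$ forces $a\mid b$, hence $a=\pm1$; thus it suffices to prove that the numerator of $\chi(\Gamma')$ is never $\pm 1$. Next I would apply Prasad's formula to a principal arithmetic subgroup $\Lambda$ inside $\Gamma'$, with the index $[\Gamma':\Lambda]$ controlled by Bruhat--Tits local data and the class number of $k$. Up to sign, the Gauss--Bonnet normalisation contributes the Euler characteristic $\chi(S^n)=2$ of the compact dual, and after applying the functional equation the analytic factors collapse to the rational special values $\zeta_k(1-2i)$ for $i=1,\dots,m$, giving a formula of the shape
\[
  \chi(\Lambda) \;=\; \pm\, 2\, D_k^{\,r}\,\Big(\textstyle\prod_{i=1}^{m}\zeta_k(1-2i)\Big)\,\prod_{v} \lambda_v,
\]
where $r$ depends only on $\dim\GG$ and the $\lambda_v$ are explicit rational local factors, equal to $1$ outside the finite set of places where $\GG$ is ramified or not hyperspecial.

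The remaining task is purely arithmetic. The discriminant factor $D_k^{\,r}$, together with Odlyzko-type lower bounds for $D_k$ in terms of $d$ and the growth of the $\zeta_k(1-2i)$, forces $|\chi(\Lambda)|>1$ for all but finitely many pairs $(k,\GG)$; since $|1/q|\le1$, those cases are excluded at once, leaving a short explicit list of fields $k$ and dimensions $n\in\{6,8,10\}$ (the range $n>10$ being already covered by Belolipetsky's bound \cite{Belo07}). For each surviving case I would compute the $p$-adic valuation of the numerator of $\chi(\Gamma')$ at a well-chosen prime $p$ and show it is positive, contradicting numerator $\pm1$. I expect this last step to be the main obstacle, precisely over $k=\Q$: there the values $\zeta(1-2i)=-B_{2i}/(2i)$ have numerator $\pm1$ for every $1\le i\le 5$, so the zeta factors contribute nothing to the numerator, and the required prime must be extracted from the constant factor $2$ and from the local factors $\lambda_v$. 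Controlling these $\lambda_v$ and the normaliser index $[\Gamma':\Lambda]$ --- that is, enumerating which finite places can be non-hyperspecial and bounding the contribution of the class group --- is the delicate heart of the argument for $n=6,8,10$.
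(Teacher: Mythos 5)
Your strategy is recognizably the same as the paper's: reduce to maximal lattices, write a maximal lattice as the normalizer of a principal arithmetic subgroup $\Lambda$, evaluate $\chi(\Lambda)$ by Prasad's volume formula together with Gauss--Bonnet, use the functional equation to pass to the rational values $\zeta_k(1-2j)$, cut the problem down to $n \in \{6,8,10\}$ and finitely many fields via Odlyzko-type discriminant bounds, and finish by exhibiting an odd prime in the numerator. Your reduction to maximal lattices (numerator $\pm 1$ in lowest terms) is correct. But there is a genuine gap, and it sits exactly where you place your confidence. The case you single out as ``the delicate heart,'' namely $k = \Q$, does not occur at all. A group of type $\tB_r$ over a number field is the spin group of a quadratic form in $2r+1$ variables; cocompactness forces, by Godement's criterion, that $\GG$ be anisotropic over $k$, whereas over $\Q$ a form in $2r+1 \ge 7$ variables of signature $(n,1)$ is indefinite and hence isotropic by Meyer's theorem. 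So $k$ is totally real with $k \neq \Q$ (the paper records this in one line), the Bernoulli-numerator phenomenon you worry about is vacuous, and no mechanism for ``extracting a prime from the factor $2$ and the $\lambda_v$'' is needed or available.

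Consequently the actual decisive work is left undone in your sketch: for the surviving fields (real quadratic with small $|D_k|$, cubic with $|D_k| = 49, 81$) one must (i) control the normalizer index, which divides $h_k 2^d 4^{\#T}$, and verify $h_k = 1$ for every surviving field so that only powers of $2$ enter the denominator; (ii) check that each local factor $\lambda_v$ attached to a parahoric of maximal type is an integer with $\lambda_v > 4$ (the paper does this from the closed formulas, which turn out to be polynomials in $q_v$; the bound $\lambda_v>4$ is what absorbs the $4^{\#T}$ from the index); and (iii) compute the special values $\zeta_k(1-2j)$ with certified denominators (Serre's method, not just floating-point output) and confirm that an odd prime appears in the numerator of $\prod_{j=1}^{r}|\zeta_k(1-2j)|$ for every field on the list --- e.g.\ $67$ for $\Q(\sqrt 5)$ at $r=3$. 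Without (i)--(iii) no contradiction with $\chi(\Gamma) = 1/q$ is reached, and it is precisely these verifications that constitute the proof. One smaller slip: after applying the functional equation, the discriminant power $|D_k|^{r^2 + r/2}$ and the archimedean constant $C(r)^d$ are absorbed exactly into the values $\zeta_k(1-2j)$, so your displayed formula, which retains a factor $D_k^{\,r}$ alongside $\prod_i \zeta_k(1-2i)$, double counts the discriminant.
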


\begin{proof}
  We can assume that $n$ is even.
  Let $\Gamma \subset  \Spin(n,1)$ be a cocompact lattice. 
 Clearly, it suffices to prove the proposition for $\Gamma$ maximal. In
 this case, $\Gamma$ can be written as the normalizer $\Gamma =
 N_{\Spin(n,1)}(\Lambda)$ of some {\em principal} arithmetic subgroup
 $\Lambda$ (see \cite[Proposition 1.4]{BorPra89}).
 By definition, there exists a number field $k \subset \R$ and a
 $k$-group $\GG$ with $\GG(\R) \cong \Spin(n,1)$ such that $\Lambda =
 \GG(k) \cap \prod_{v \in \Vf} P_v$, for some coherent collection
 $(P_v)_{v\in\Vf}$ of
 parahoric subgroups $P_v \subset \GG(k_v)$ (indexed by the set $\Vf$ of
 finite places of $k$). 
 It follows from the classification of algebraic groups (cf.
 \cite{Tits66}) that $\GG$ is of type $\tB_r$ with $r = n / 2\; (> 2)$,
 the field $k$ is totally real, and (using Godement's criterion) $k \neq
 \Q$. Let us denote by $d$ the degree $[k:\Q]$.

 %For each $v \in \Vf$ we denote by $\Delta_v$ the local Dynkin diagram
 %of $\GG(k_v)$ and by $\theta_v \subset \Delta_v$ its subset of vertices
 %corresponding to the type of the parahoric $P_v$. 
 Let $T \subset \Vf$ be
 the set of places where $P_v$ is not hyperspecial. By Prasad's volume
 formula (see \cite{Pra89} and \cite[Section 4.2]{BorPra89}), we have:
 \begin{align}
   |\chi(\Lambda)| &= 2 |D_k|^{r^2+r / 2}
   C(r)^d
   \prod_{j=1}^r \zeta_k(2j) \prod_{v \in T} \lambda_v,
   \label{eq:volume}
 \end{align}
 with $D_k$ (resp. $\zeta_k$) the discriminant (resp. Dedekind zeta
 function) of $k$; the constant $C(r)$ is given by
 \begin{align}
  C(r) =  \prod_{j=1}^r\frac{(2j-1)!}{(2\pi)^{2j}};
   \label{eq-Cr}
 \end{align}
 and each $\lambda_v$ is given by the formula
 \begin{align}
   \lambda_v &= \frac{1}{(q_v)^{(\dim\cM_v - \dim \MM_v) / 2}}\;
   \frac{|\cM(\f_v)|}{|\MM_v(\f_v)|},
   \label{eq:lambda}
 \end{align}
 where $\f_v$ is the residue field of $k_v$, of size $q_v$, and the
 reductive $\f_v$-groups $\MM_v$ and $\cM_v$ associated with $P_v$ are those
 described in \cite{Pra89}. By definition $\cM_v$ is semisimple of type
 $\tB_r$.

 \begin{table}
   \centering
   \begin{tabular}{llc}
     $\GG / k_v$ & isogeny type of $\MM_v$   & $\lambda_v$\\[9pt] \hline
     \mbox{split: } & $\tB_{r-1} \times (\mbox{split }\GL_1)$  & $\frac{q^{2r}-1}{q-1}$\\[9pt]
    &$\tD_i \times \tB_{r-i}$  ($i=2,\dots,r-1$) & $
     \frac{(q^i+1)\prod_{k=i+1}^r(q^{2k}-1)}{\prod_{k=1}^{r-i}(q^{2k}-1)}$\\[9pt]
    &$^1\tD_r$ & $q^r + 1$\\[9pt]
    \mbox{non-split: } & $\tB_{r-1} \times (\mbox{nonsplit } \GL_1)$ & $\frac{q^{2r}-1}{q+1}$\\[9pt]
    & $^2\tD_{i+1} \times \tB_{r-i-1}$ ($i=1,\ldots,r-2$) & $
    \frac{(q^{i+1}-1)\prod_{k=i+2}^r(q^{2k}-1)}{\prod_{k=1}^{r-i-1}(q^{2k}-1)}$\\[9pt]
    & $^2\tD_r$ & $q^r -1$\\[9pt]
   \end{tabular}
   \caption{$\lambda_v$ for $P_v$ of maximal type}
   \label{tab:lambda-fac}
 \end{table}

 A necessary condition for  $\Gamma = N_{\GG(\R)}(\Lambda)$ to be maximal is that 
each $P_v$ defining $\Lambda$ has maximal type in the sense of
\cite{RyzCher97}. We list in Table \ref{tab:lambda-fac} the factors
$\lambda_v$ corresponding to parahoric sugroups $P_v$ of maximal types
(to improve the readability we set $q_v=q$ in the formulas). This list
of maximal type and the formulas for $\lambda_v$ are essentially the same
as in \cite[Table 1]{Belo04}: the only difference is a factor $2$ in the
denominator of some $\lambda_v$, which can be explained from the fact
that Belolipetsky did not work with $\GG$ simply connected. 

From \cite[Section 5]{BorPra89} (cf. also \cite[Chapter 12]{EmePhD}) we can 
deduce that the index 
$[\Gamma : \Lambda]$ of $\Lambda$ in its normalizer has the following
property:
 \begin{align}
   [\Gamma : \Lambda] \mbox{ divides } h_k 2^{d} 4^{\#T}.
   \label{eq:index}
 \end{align}
Moreover, a case by case analysis of the possible factor $\lambda_v$
shows that $\lambda_v > 4$, so that $4^{-\#T} \prod_{v \in T} \lambda_v
\ge 1$ (with equality exactly when $T$ is empty).
We thus have the following  lower bound for the Euler
characteristic of any maximal arithmetic subgroup $\Gamma \subset
\Spin(n,1)$:
\begin{align}
  |\chi(\Gamma)| &\ge \frac{2}{h_k} \left(\frac{C(r)}{2}\right)^d
  |D_k|^{r^2 + r / 2 } \zeta_k(2) \cdots \zeta_k(2r)
  \label{eq:first-bd-Gamma}
\end{align}
We make use of the following upper bound for the class number (see for
instance \cite[Section 7.2]{BelEme}):
\begin{align}
 h_k &\le 16 \left(\frac{\pi}{12}\right)^d |D_k|,
  \label{eq:bound-hk}
\end{align}
which together with the basic inequality $\zeta_k(2j) > 1$
transforms \eqref{eq:first-bd-Gamma} into 
\begin{align}
  |\chi(\Gamma)| &> \frac{1}{8} \left(\frac{6 \cdot C(r)}{\pi}\right)^d
  |D_k|^{r^2 + r / 2 -1}.
  \label{eq:bd-no-hk}
\end{align}
Moreover, according to \cite[Table 4]{Odl90}, we have that for a degree
$d \ge 5$ the discriminant of $k$ is larger than $(6.5)^d$. With this
estimates we can check that for $r \ge 3$ and $d \ge 5$ we have
$|\chi(\Gamma)| > 1$. For the lower degrees, if we suppose that
$|\chi(\Gamma)| \le 1$, we obtain upper bounds for $|D_k|$ from Equation
\eqref{eq:bd-no-hk}. This upper bounds exclude the existence of such a
$\Gamma$ for $r \ge 6$ (which is already clear from the work of
Belolipetsky \cite{Belo04}). For $r=3$ (where the bounds are the worst) we obtain
the following:
\begin{align*}
  d = 2: &\quad |D_k| \le 28;\\
  d = 3: &\quad |D_k| \le 134;\\
  d = 4: &\quad |D_k| \le 640.
  %\label{eq:upperBd-Dk}
\end{align*}
From the existing tables of number fields (e.g.,
\cite{Bordeaux_data,Qaos}) we can list the possibilities this leaves us
for $k$. We find that no field with $d=4$ can appear, and for $d=2,3$
all possibilities have class number $h_k = 1$. Using Equation
\eqref{eq:bound-hk} with $h_k = 1$ we then improve the upper bounds for
$|D_k|$ and thus shorten the list of possible fileds. For $r=5$ only
$|D_k| = 5$ arises, and for $r=4$ we have $|D_k| \le 11$ (the
possibility $d = 3$ is excluded here). For $r=3$, we are left with
$|D_k| \le 20$ when $d = 2$, and  $|D_k| = 49$ or $81$ when $d=3$.

With $h_k = 1$, using the functional equation for $\zeta_k$ and the property
\eqref{eq:index} for the index $[\Gamma:\Lambda]$, we can express the
Euler characteristic of $\Gamma$ as 
\begin{align}
  |\chi(\Gamma)| &= \frac{1}{2^a} \prod_{v \in T} \lambda_v
  \;\prod_{j=1}^r |\zeta_k(1-2j)|
 \label{eq:EP-rational}
\end{align}
for some integer $a$. The special values $\zeta_k(1-2j)$, which are
rational by the Klingen-Siegel theorem, can be computed with the software
Pari/GP (cf. Remark \ref{rmk:special-val}).
We list in Table \ref{tab:special-values} the
values we need. We check that for
every field $k$ under consideration a prime factor $>2$ appear in the
numerator of  
the product $\prod_{j=1}^r |\zeta_k(1-2j)|$.
A direct computation for $r=3,4,5$ shows that the
formula in Table \ref{tab:lambda-fac} for each factor $\lambda_v$ is actually 
given by a polynomial in $q$ (this seems to hold for any $r$).
In particular, we always have $\lambda_v \in \Z$,  and  we conclude from
\eqref{eq:EP-rational} that 
$|\chi(\Gamma)|$ cannot be a reciprocal integer.  
\begin{table}
  \centering
  \begin{tabular}{ccccccc}
   degree & $|D_k|$ & $\zeta_k(-1)$ & $\zeta_k(-3)$ & $\zeta_k(-5)$ &
   $\zeta_k(-7)$ & $\zeta_k(-9)$ \\ \hline
    $d=2$ & 5 & 1/30 & 1/60  & 67/630 & 361/120 & 412751/1650 \\
    &8 & 1/12 & 11/120 & 361/252& 24611/240& \\
    &12 & 1/6 & 23/60 & 1681/126& & \\
    &13 & 1/6 & 29/60 &  33463/1638 & & \\
    &17 & 1/3 & 41/30 & 5791/63 & & \\[2pt]
   $d=3$ & 49 & -1/21 & 79/210 & -7393/63 & & \\
   &81 & -1/9 & 199/90 & -50353/27 & & \\[8pt]
  \end{tabular}
  \caption{Special values of $\zeta_k$}
  \label{tab:special-values}
\end{table}
\end{proof}

\begin{rmk}
 \label{rmk:special-val}
 The function \texttt{zetak} in Pari/GP allows to obtain approximate
 values for $\zeta_k(1-2j)$. On the other hand the size of the
 denominator of the product $\prod_{j=1}^m \zeta_k(1-2j)$ can be bounded
 by the method described in \cite[Section 3.7]{Serr71}.
 By recursion on $m$, this allows to ascertain that the values
 $\zeta_k(1-2j)$ correspond exactly to the fractions given in Table
 \ref{tab:special-values}.
\end{rmk}

\begin{rmk}
 The fact that for $|D_k| = 5$ the value  $\zeta_k(-1) \zeta_k(-3)$  has
 trivial numerator explains why the proof fails for
 $n = 4$ (i.e., $r=2$). And indeed  there is a principal
 arithmetic subgroup $\Gamma \subset \Spin(4,1)$ with $|\chi(\Gamma)| =
 1 / 14400$ and whose image in $\Isom^+(\Hy^4)$ is contained as an index
 $2$ subgroup of the Coxeter group $W_1$. On the other hand, for $|D_k| > 5$ the appearance
 of a non-trivial numerator in $\zeta_k(-3)$ shows -- at least for the
 fields considered in Table \ref{tab:special-values} -- the
 impossibility of a $\Gamma$ defined over $k$ with $\chi(\Gamma)$ a
 reciprocal integer. This is the first step in Belolipetsky's proof of
 Theorem \ref{thm:dim-4}. 
\end{rmk}

\bibliographystyle{amsplain}
\bibliography{Euler2}

\end{document}